\documentclass[11pt, oneside]{article}   	% use "amsart" instead of "article" for AMSLaTeX format
\usepackage{geometry}                		% See geometry.pdf to learn the layout options. There are lots.
\usepackage{graphicx}				% Use pdf, png, jpg, or eps§ with pdflatex; use eps in DVI mode
\usepackage{amssymb}
\usepackage{amsmath}
\usepackage{amsthm}
\usepackage{harpoon}
\usepackage{accents}

\usepackage{tikz}  
\usepackage{float}
\restylefloat{table}
\usepackage{mathrsfs}
\usepackage{verbatim}
\usepackage{enumitem}  

\usetikzlibrary{positioning,chains,fit,shapes,calc}  

\newtheorem{theorem}{Theorem}
\newtheorem{lemma}{Lemma}
\newtheorem{definition}{Definition}

\newtheorem{remark}{Remark}
\newtheorem{corollary}{Corollary}

\newcommand{\ppp}{\[\begin{aligned}}
\newcommand{\ooo}{\end{aligned}\]}

\begin{document}

\title{A Boolean polynomial operator for the Collatz $3n+1$ problem}
\author{Mario DeFranco}
\maketitle

\abstract{We define a reformulation of the Collatz $3n+1$ map which allows us to express it as an operator on the set of sequences of Boolean polynomials. We express this operator in terms of certain carry sequences that arise from addition in base-2, and then we simplify them and find explicit formulas for them.}

\section{Introduction}
The Collatz $3n+1$ problem is a well-known problem in number theory which asks whether successive iterations of the map $C$ (defined below) eventually reach 1 for every positive integer input. See \cite{Lagarias} for more information. This problem is also known under various names such as Hailstone sequence, Hasse's algorithm, Kakutani's problem, the Syracuse problem, the Thwaites conjecture, and Ulam's problem.  

Let 
\begin{align*}
C(k) \colon \mathbb{N} & \rightarrow \mathbb{N} \\ 
k &\mapsto \begin{cases}
\frac{k}{2} &\text{ if $k$ is even} \\ 
3k+1 &\text{ otherwise}.   
\end{cases}
\end{align*}

Let $v_2(k)$ denote the largest integer $r$ such that $2^r$ is a factor of $k$. It is customary to consolidate the divisions by 2 with the map
\begin{align*}
\mathscr{C}(k) \colon \mathbb{N} & \rightarrow \mathbb{N} \\ 
k &\mapsto \frac{3k+1}{2^{v_2(3k+1)}}.
\end{align*}

Instead of adding 1 to $3k$ and then dividing by the power of 2, we instead add the power of 2 to $3k$, leading to the map 
\begin{align*}
\mathscr{O}(k) \colon \mathbb{N} & \rightarrow \mathbb{N}\\ 
k &\mapsto 2^{v_2(k)}+ 3k.
\end{align*}
Using the base-2 expansion of an integer $k$, we express $\mathscr{O}$ as an operator on the set $R^\infty$ of sequences $y$ of elements in a Boolean algebra $R$. 
With 
\[
y = (y(i))_{i=1}^\infty
\]
and 
\[
\mathscr{O}(y) = (\mathscr{O}(y)(i))_{i=1}^\infty,
\]
we prove explicit formulas for $\mathscr{O}(y)(i)$ as a polynomial in the quantities
\[
y(j), 1 \leq j \leq i. 
\]

\section{Defining the Boolean polynomial operator $\mathscr{O}$}

Fix an integer $n$. Let $x_i, 1 \leq i \leq n$ denote indeterminates, and let $I$ denote the ideal 
\[
I = \langle x_i^{2}-x_i \colon 1 \leq i \leq n \rangle. 
\]
Let $R$ denote the ring 
\[
R= \mathbb{F}_2[x_1, \ldots x_n]/ I.
\]
Let $R^\infty$ denote the set of sequences of elements in $R$, and for $y \in R^\infty$ we write  
\[
y = (y(i))_{i=1}^\infty.
\]

\begin{definition} 
Suppose we have two sequences $u=(u(i))_{i=1}^\infty$ and $v=(v(i))_{i=1}^\infty$ in $R^\infty$. We define the  
$\mathrm{Add}(u,v)$ as the sequence
\[
\mathrm{Add}(u,v)= (u(i)+ v(i) + c(u,v;i))_{i=1}^{\infty}
\]
where  $c(u,v;i)$ is defined for $i \geq 1$ by
\begin{align*}
c(u,v;i+1) &= u(i) v(i)+ c(u,v;i)(u(i)+v(i))  
\end{align*}
with $c(u,v;1)=0$. We call $c(u,v) =(c(u,v;i))_{i=1}^{\infty}$ the \emph{carry sequence} for $u$ and $v$. 
\end{definition} 

\begin{definition} 
For $y \in R^\infty$, define the sequence $\mathrm{Shift}_1(y) \in R^\infty$ by 

\[
\mathrm{Shift}_1(y) = (y(i-1))_{i=1}^\infty
\]
with $y(0)$ denoting 0. 

Define $P(y;j)$ to be 
\[
P(y;j) =  \prod_{i=1}^{j}(1+y(i)).
\]

Define the sequence $\mathrm{PlusPowerTwo}(y)$ by 
\[
\mathrm{PlusPowerTwo}(y)= (y(j) P(y;j-1))_{j=1}^\infty.
\]
Define the sequence $\gamma(y) = (\gamma(y; j))_{j=1}^\infty$ to be the carry sequence for $y$ and $\mathrm{PlusPowerTwo}(y)$. 
Define the sequence $\delta(y) = (\delta(y; j))_{j=1}^\infty$ to be the carry sequence for $\mathrm{Add}(y,\mathrm{PlusPowerTwo}(y))$ and $\mathrm{Shift}_1(y) $.  
\end{definition} 

Thus let $\sigma$ be a specialization of the $x_i, 1 \leq i \leq n,$ to values in $\mathbb{F}_2$, and suppose  $y \in R^\infty$ satisfies 
\[
y(i)\big |_\sigma =0
\]
for all but finitely many $i$. Then $y\big|_\sigma$ corresponds to the base-2 expansion of some integer $k$ via 
\[
k = \sum_{i=1}^\infty y(i) \big |_\sigma 2^{i-1}. 
\] 
 The base-2 expansion of the sum $k+2^{v_2(k)}$ is therefore the sequence 
\[
\mathrm{Add}(y,\mathrm{PlusPowerTwo}(y))\big|_\sigma.
\]
The base-2 expansion of $2k$ is 
\[
\mathrm{Shift}_1(y)\big|_\sigma. 
\]
The base-2 expansion of $3k+2^{v_2(k)}$ is then 
\[
\mathrm{Add}(\mathrm{Add}(y,\mathrm{PlusPowerTwo}(y)), \mathrm{Shift}_1(y))\big|_\sigma
\]
which we denote by $\mathscr{O}(y)$.

\begin{theorem}
For $y \in R^\infty$ with 
\[
\mathscr{O}(y) = (\mathscr{O}(y)(i))_{i=1}^\infty,
\]
we have 
\[
\mathscr{O}(y)(i) = \delta(y; i)+ \gamma(y;i) + y(i) P(y; i-1) + y(i) + y(i-1). 
\] 
\end{theorem}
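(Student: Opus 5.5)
This follows directly by unwinding the definitions, since $\mathscr{O}(y)$ is defined as a nested application of $\mathrm{Add}$. I will work componentwise in $R$, using only that $\mathrm{Add}(u,v)(i) = u(i)+v(i)+c(u,v;i)$. No Boolean identities such as $x^2=x$ are needed, because the claimed formula is a linear rewriting. I will also use that $R$ has characteristic $2$, so the order of terms is irrelevant and signs do not arise.

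\textbf{Step 1: the inner sum.} Set $u=\mathrm{Add}(y,\mathrm{PlusPowerTwo}(y))$. By the definition of $\mathrm{Add}$, together with $\mathrm{PlusPowerTwo}(y)(i)=y(i)P(y;i-1)$ and the definition of $\gamma(y)$ as the carry sequence for $y$ and $\mathrm{PlusPowerTwo}(y)$, we get
\[
u(i) = y(i) + y(i)P(y;i-1) + \gamma(y;i).
\]

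\textbf{Step 2: the outer sum.} We have $\mathscr{O}(y)=\mathrm{Add}(u,\mathrm{Shift}_1(y))$ and $\mathrm{Shift}_1(y)(i)=y(i-1)$, with the convention $y(0)=0$. The carry sequence for $u$ and $\mathrm{Shift}_1(y)$ is by definition $\delta(y)$. Hence
\[
\mathscr{O}(y)(i) = u(i) + y(i-1) + \delta(y;i).
\]
Substituting Step 1 and regrouping gives exactly $\delta(y;i)+\gamma(y;i)+y(i)P(y;i-1)+y(i)+y(i-1)$.

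\textbf{Interpretation.} The only point needing care is the meaning of $\mathscr{O}(y)$ for a general $y\in R^\infty$. The text introduces it through specializations $\sigma$, but it is naturally defined as the formal sequence $\mathrm{Add}(\mathrm{Add}(y,\mathrm{PlusPowerTwo}(y)),\mathrm{Shift}_1(y))$ in $R^\infty$. I will state this explicitly, noting that it commutes with every specialization $\sigma$ because each operation is polynomial. The theorem is then a formal identity in $R$.

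There is no real obstacle here. The substantive work presumably lies in the later simplification of $\gamma$ and $\delta$, not in this statement.
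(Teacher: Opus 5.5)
Your proposal is correct and follows the paper's route: the paper's proof just says the identity follows immediately from the definitions once the two $\mathrm{Add}$ operations are applied, and your Steps 1 and 2 spell out exactly that. Your remark that $\mathscr{O}(y)$ should be read as the formal sequence $\mathrm{Add}(\mathrm{Add}(y,\mathrm{PlusPowerTwo}(y)),\mathrm{Shift}_1(y))$ in $R^\infty$, commuting with specializations, is a useful clarification of the paper's somewhat loose definition.
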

\begin{proof}
This follows immediately from the definitions after applying the two $\mathrm{Add}$ operations.  
\end{proof}

The remainder of this paper consists of finding explicit expressions for $\gamma(y)$ and $\delta(y)$ in terms of the following expressions.

\begin{definition} 

\begin{align*}
U(y, a, b) &= \prod_{i=1}^{b-a+1}y(b-i+1) \\
V_1(y, a, b) &= \prod_{i=1}^{b-a+1}( y(b-i+1)+ \mathbf{1}(i \equiv 0 \pmod 2)) \\
V_2(y,a,b) &= \prod_{i=1}^{b-a+1}( y(a+i-1)+ \mathbf{1}(i \equiv 0 \pmod 2)) \\
P(y, a, b) &= \prod_{i=1}^{b-a+1}( 1+y(b-i+1)) \\
\end{align*}
\end{definition}

\section{Formula for $\gamma$}

\begin{theorem} \label{t gamma sum}
\[
\gamma(y;j+1) = \sum_{k=0}^{\lfloor \frac{j-3}{2} \rfloor } P(y;j-2k-2) U(j-2k,j) %\prod_{i=0}^{2k}y(j-i) 
 \]
\end{theorem}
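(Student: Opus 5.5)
The plan is to first find a closed form for the carry $\gamma(y;j+1)$ as a sum over the position of the lowest set bit, and then fold that sum in pairs to reach the stated alternating-step form. For the first part, write $z=\mathrm{PlusPowerTwo}(y)$, so $z(j)=y(j)P(y;j-1)$. Then $\gamma(y;j+1)=y(j)z(j)+\gamma(y;j)(y(j)+z(j))$ with $\gamma(y;1)=0$.

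\textbf{Step 1 (unfolded formula).} I claim that for $j\ge 1$
\[
\gamma(y;j+1)=\sum_{v=1}^{j} P(y;v-1)\,U(y,v,j),
\]
with the convention $P(y;0)=1$. I would prove this by induction on $j$, using the identities $y(j)^2=y(j)$ and $y(j)(1+y(j))=0$ in $R$.

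The base case $j=1$ gives $\gamma(y;2)=y(1)z(1)=y(1)$, which matches the formula.

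For the inductive step, first note that $y(j)z(j)=z(j)=P(y;j-1)y(j)$, which is the $v=j$ term. Next, $\gamma(y;j)(y(j)+z(j))$ is computed as follows. By the inductive hypothesis every term of $\gamma(y;j)$ with $v\le j-1$ contains the factor $P(y;v-1)$. On the other hand, $z(j)$ contains the factor $P(y;j-1)$, and $P(y;j-1)$ contains $(1+y(v))$. Since $U(y,v,j-1)$ contains $y(v)$, each product of such a term with $z(j)$ vanishes. Hence $\gamma(y;j)z(j)=0$. What remains is $\gamma(y;j)y(j)=\sum_{v=1}^{j-1}P(y;v-1)U(y,v,j)$, which completes the induction.

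This is just the Boolean-polynomial form of the arithmetic fact that adding $2^{v_2(k)}$ propagates a carry exactly through the run of ones starting at the lowest one bit. The evaluation-at-$\sigma$ picture is a sanity check, but I will argue directly in $R$.

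\textbf{Step 2 (pairing).} Group the terms $v=j-2k$ and $v=j-2k-1$. Using
\[
P(y;j-2k-1)=P(y;j-2k-2)\,(1+y(j-2k-1))
\]
and
\[
U(y,j-2k-1,j)=y(j-2k-1)\,U(y,j-2k,j),
\]
the pair sums to
\[
P(y;j-2k-2)\,U(y,j-2k,j)\,\bigl[(1+y(j-2k-1))+y(j-2k-1)\bigr]=P(y;j-2k-2)\,U(y,j-2k,j).
\]
This produces exactly the summand in Theorem \ref{t gamma sum}, with $U(j-2k,j)$ read as $U(y,j-2k,j)$.

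\textbf{Step 3 (range of summation).} This is the step I expect to be the main, if minor, obstacle: the index ranges and boundary conventions. The pairs run over $k$ with $j-2k-1\ge 1$. If $j$ is odd, a single unpaired term $v=1$ is left over, namely $U(y,1,j)$. I would reconcile this with the stated upper limit $\lfloor (j-3)/2\rfloor$ and check whether the leftover term is absorbed by reading $P(y;m)=1$ for $m\le 0$ (for instance, allowing the $k$ with $j-2k=1$ to give $P(y;-1)U(y,1,j)$). I would verify small cases $j=1,2,3,4$ by direct computation from the recursion to pin down the correct endpoint convention, adjusting the statement's bounds if necessary. The algebraic heart of the argument is Steps 1 and 2.
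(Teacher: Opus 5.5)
\textbf{Steps 1 and 2 are right, but Step 3 cannot be closed as you hope: the printed upper limit is off by one, so the statement as written is false.}

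Steps 1 and 2 are correct and follow essentially the paper's own argument. The paper also kills $\gamma(y;j)z(j)$, via $\gamma(y;j)P(y;j-1)=0$. It then unfolds the recursion to $\sum_{i=0}^{j-1}P(y;j-i-1)\prod_{k=0}^{i}y(j-k)$, which is your Step 1 with $v=j-i$. Finally it collapses this sum by noting that a monomial whose top block is $[j-i,j]$ occurs $i+1$ times. Your pairing via $(1+y(j-2k-1))+y(j-2k-1)=1$ performs the same collapse more transparently.

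The genuine gap is Step 3, which you leave open. What Steps 1--2 actually prove is
\[
\gamma(y;j+1)=\sum_{k=0}^{\lfloor (j-1)/2\rfloor}P(y;j-2k-2)\,U(y,j-2k,j),\qquad P(y;0)=P(y;-1)=1.
\]
The printed upper limit $\lfloor (j-3)/2\rfloor$ equals $\lfloor (j-1)/2\rfloor-1$, so it drops the top term for every $j$.
\begin{itemize}
\item For odd $j$ the dropped term is your leftover $U(y,1,j)$. It sits at $k=(j-1)/2>\lfloor (j-3)/2\rfloor$, so no convention for $P(y;m)$ with $m\le 0$ can bring it inside the printed range.
\item For even $j$, a case you did not flag, the printed range also loses a genuine pair, $k=(j-2)/2$. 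That is the term $P(y;0)U(y,2,j)=y(2)y(3)\cdots y(j)$.
\end{itemize}

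Concretely, the printed statement fails already for small $j$:
\begin{itemize}
\item For $j=1$ the recursion gives $\gamma(y;2)=y(1)z(1)=y(1)$, while the printed sum over $0\le k\le -1$ is empty.
\item For $j=3$ and $y=(1,1,1,0,0,\ldots)$, the integer $7$, adding $1$ gives $8$ and carries into the fourth position, so $\gamma(y;4)=1$. The printed formula keeps only $k=0$ and gives $(1+y(1))y(3)=0$.
\end{itemize}

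So you should not try to \emph{reconcile} with the printed bound. State the theorem with upper limit $\lfloor (j-1)/2\rfloor$ and the convention above; with that correction your Steps 1--2 are a complete proof. The paper's own proof has the same blind spot. Its parity count keeps every even $i$ with $0\le i\le j-1$, i.e.\ $k=i/2\le\lfloor (j-1)/2\rfloor$, and it never checks this against the printed limit.
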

\begin{proof}
First note that for all $j$, $\gamma(y;j+1)$ is polynomial in $\mathbb{F}_2[y(1), \ldots, y(j)]$ with zero constant term. This follows from straightforward induction on $j$.  
 Since $P(y;j)$ by definition has a factor of $1+y(i)$ for each $1 \leq i \leq j$, we obtain  
 \[
 \gamma(y;j+1) P(y;j)=0 
 \]
 since $y(i)(1+y(i))=0$. Therefore, from the definition of carry sequence, we may express the recursive definition for $\gamma(y;j+1), j>0$ as
\[
\gamma(y;j+1) = y(j)(P(y;j-1)+ \gamma(y;j))
\]
which yields 
\[
\gamma(y;j+1) = \sum_{i=0}^{j-1} P(y;j-i-1) \prod_{k=0}^i y(j-k).
\]
Now the term at index $i$ consists of all expressions of the form $y(S)$ with $[j-i,j] \subseteq S$. Consider $y(T)$ with $[j-i,j] \subseteq T$ but $j-i-1 \notin T$. Then $y(T)$ appears at each index $0 \leq  i' \leq i$. Thus its coefficient in the sum is $i+1$, so we have only those $y(T)$ with even $i$ contributing. This completes the proof.    
\end{proof}

\section{Formula for $\delta$}

\begin{lemma} \label{l gd}
For $y \in R^\infty$ and $j \geq 1$,
\[
\gamma(y;j) \delta(y;j) = 0.
\]
\end{lemma}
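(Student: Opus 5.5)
Since $R$ is a Boolean ring, every element is determined by its values under all specializations $\sigma$ of the $x_i$ to $\mathbb{F}_2$: $R$ embeds into the ring of functions $\mathbb{F}_2^n \to \mathbb{F}_2$. It therefore suffices to prove $\gamma(y;j)\delta(y;j)\big|_\sigma = 0$ for every $\sigma$. Moreover $\gamma(y;j)$ and $\delta(y;j)$ depend only on $y(1),\dots,y(j-1)$. Hence, for a fixed $j$, we may replace $y|_\sigma$ by its truncation to its first $j-1$ entries. We then argue about integers, writing $k=\sum_{i<j} y(i)|_\sigma 2^{i-1}$.

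The plan is to read the two carries arithmetically.

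First, if $k=0$ then $\mathrm{PlusPowerTwo}(y)|_\sigma=0$, so $\gamma=0$ and there is nothing to prove. Otherwise let $r=v_2(k)$, so $k = 2^r(2m+1)$.

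The carry $\gamma(y;j)|_\sigma$ into bit position $j$ (bit index $j-1$) when adding $k$ and $2^r$ is computed as follows. Adding $2^r$ to $k$ flips the block of consecutive ones of $k$ starting at bit $r$. Hence $\gamma(y;j)|_\sigma=1$ exactly when bits $r,\dots,j-2$ of $k$ are all $1$ and $j-2\ge r$. In that case the low bits of $k+2^r$ are as follows: bits $r,\dots,j-2$ of the sum are $0$, and bits below $r$ are $0$. In particular the first $j-1$ bits of $\mathrm{Add}(y,\mathrm{PlusPowerTwo}(y))|_\sigma$ are all zero.

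Next compute $\delta(y;j)|_\sigma$, the carry into position $j$ when adding $w=\mathrm{Add}(y,\mathrm{PlusPowerTwo}(y))$ to $\mathrm{Shift}_1(y)$. A carry into position $j$ requires some position $i<j$ generating a carry, i.e.\ having both summand bits equal to $1$, followed by propagation. In the case $\gamma(y;j)|_\sigma=1$, all entries $w(i)|_\sigma$ for $i\le j-1$ vanish. A generate at position $i\le j-1$ would need $w(i)=1$, which is impossible. Formally, the recursion $c(i+1)=w(i)s(i)+c(i)(w(i)+s(i))$ with $w(i)=0$ for all $i\le j-1$ becomes $c(i+1)=c(i)s(i)$. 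Starting from $c(1)=0$, induction gives $c(i)=0$ for all $i\le j$. Hence $\delta(y;j)|_\sigma=0$.

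Combining the cases, the product vanishes under every specialization, hence in $R$.

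The main obstacle is making the claim ``$\gamma(y;j)=1$ forces $w(1),\dots,w(j-1)$ to vanish'' rigorous at the level of the carry recursion rather than by an integer-arithmetic hand wave. I would do it directly by induction on $i$ in the specialized setting, with inductive hypothesis
\[
w(i)=0 \quad\text{and}\quad \gamma(y;i+1)=P(y;i)+y\text{-run condition}.
\]
An alternative is to prove in $R$ the polynomial identity $\gamma(y;j)\,w(i)=0$ for $i<j$. This identity would follow from the formula $\gamma(y;j+1)=y(j)(P(y;j-1)+\gamma(y;j))$ obtained in the proof of Theorem~\ref{t gamma sum}. The reason is that $\gamma(y;j)$ is divisible by $y(j-1)$ and, recursively, forces the pattern of a block of ones preceded by zeros. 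Either way the final step, $\delta$'s recursion collapsing to $c(i+1)=c(i)y(i-1)$, is routine.
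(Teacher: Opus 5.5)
Your proof is correct, but it takes a different route from the paper. The paper inducts on $j$, using the lemma itself as the inductive hypothesis. Assuming $\gamma(y;j+1)=\delta(y;j+1)=1$ under some $\sigma$, it writes each carry as the majority of its three step-$j$ inputs: $\gamma(y;j)$, $y(j)P(y;j-1)$, $y(j)$ for the first, and $\delta(y;j)$, $w(j)$, $y(j-1)$ for the second, where $w(j)=\gamma(y;j)+y(j)P(y;j-1)+y(j)$. It then reaches a contradiction by cases on $y(j-1)$ and $y(j)$, using $\gamma(y;j)\delta(y;j)=0$ to kill one input in each case. It never determines when $\gamma$ equals $1$, and it needs neither truncation nor an integer interpretation. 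You instead solve the first carry chain explicitly and prove a stronger intermediate fact: whenever $\gamma(y;j)|_\sigma=1$, all of $w(1),\dots,w(j-1)$ vanish. Equivalently, $k+2^{v_2(k)}\equiv 0 \pmod{2^{j-1}}$ for the truncated $k$. The $\delta$ recursion then reduces to $c(i+1)=c(i)y(i-1)$ and never leaves $0$. This gives a transparent arithmetic reason for the lemma and the stronger identity $\gamma(y;j)\,w(i)=0$ for $i<j$. The cost is the truncation bookkeeping, which you handle correctly. The ``obstacle'' you flag at the end is not a real one, and the loosely stated inductive hypothesis there is unnecessary. Let $p$ be the position of the lowest $1$ of $y|_\sigma$; then $\mathrm{PlusPowerTwo}(y)|_\sigma$ is supported at $p$ alone. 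The carry recursion therefore gives directly, under $\sigma$, $\gamma(y;i)=\prod_{l=p+1}^{i-1}y(l)$ for $i>p$ and $\gamma(y;i)=0$ for $i\le p$. Likewise $w(i)=0$ for $i\le p$ and $w(i)=y(i)+\gamma(y;i)$ for $i>p$, which equals $1+1=0$ for $p<i<j$ whenever $\gamma(y;j)=1$.
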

\begin{proof} 
We use induction on $j$. The statement is true for $j=1$. Assume it is true for some $j \geq 1$.  Suppose there is some specialization $\sigma$ of the $x_i$ to values in $\mathbb{F}_2$ such that for a fixed $j$ that $\delta(y;j+1)$ and $\gamma(y;j+1)$ are both equal to 1 under $\sigma$. For the remainder of the proof we apply $\sigma$ to all $x_i$. Now by the definition of the carry sequence, $\delta(y;j+1)$ equals 1 exactly when at least two of the three quantities 
\begin{align}
\delta(y;j),\\ 
\gamma(y;j)+ y(j)P(y;j-1)+ y(j),\\ 
y(j-1)
\end{align} 
are equal to 1. 
Likewise $\gamma(y;j+1)$ equals 1 exactly when at least two of the three quantities
\begin{align}
\gamma(y;j),\\ 
y(j)P(y;j-1),\\ 
y(j)
\end{align} 
are equal to 1. 

Case $y(j-1)=1$: Then $P(y;j-1)=0$, so  $y(j) = \gamma(y;j) =1$. By induction hypothesis we must have $\delta(y;j)=0$.  But 
\[
\gamma(y;j)+ y(j)P(y;j-1)+ y(j) = 1+0+1\equiv 0 \pmod 2. 
\]

So we may assume $y(j-1)=0$. 

Case $y(j)=1$ and $y(j-1)=0$: Since $y(j-1)=0$, we must have $\delta(y;j) = 1$. By induction hypothesis, we have $\gamma(y;j) = 0$, and so  $y(j) P(y;j-1) =1$. But then 
\[
\gamma(y;j)+ y(j)P(y;j-1)+ y(j) = 0+1+1\equiv 0 \pmod 2. 
\]

Therefore we see $y(j)=0$ which contradicts the assumption that $\gamma(y;j+1)=1$. Thus $\gamma(y;j+1) \delta(y;j+1) = 0$ for any specialization $\sigma$, so this product is equal to zero as an element of $R$. This completes the induction step and the proof. 
\end{proof}

\begin{definition} 
\begin{align*}
\delta_1(y; j) &= \sum_{k=0}^{j-1} \gamma(y;j-k) \prod_{i=1}^k (y(j-i+1)+y(j-i))\\
\delta_2(yl j) &= \sum_{k=0}^{j-2} y(j-k)y(j-k-1) \prod_{i=1}^k (y(j-i+1)+y(j-i))
\end{align*}
\end{definition}

\begin{lemma} \label{l delta sums}

\begin{align}
\delta(y;j+1) = \delta_1(y;j)+ \delta_2(y;j)
\end{align}
\end{lemma}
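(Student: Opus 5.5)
The plan is to simplify the carry recursion defining $\delta$ to a linear recursion of the form
\[
\delta(y;j+1) = \bigl(\gamma(y;j) + y(j)y(j-1)\bigr) + \delta(y;j)\bigl(y(j)+y(j-1)\bigr),
\]
and then unroll it. By definition, with $w(j) = \gamma(y;j)+y(j)P(y;j-1)+y(j)$ the $j$-th term of $\mathrm{Add}(y,\mathrm{PlusPowerTwo}(y))$, we have
\[
\delta(y;j+1) = w(j)\,y(j-1) + \delta(y;j)\bigl(w(j)+y(j-1)\bigr).
\]
I will simplify both pieces using two facts: Lemma \ref{l gd}, which gives $\gamma(y;j)\delta(y;j)=0$, and the orthogonality relations $y(j-1)P(y;j-1)=0$ and $\gamma(y;j)P(y;j-1)=0$. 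The latter holds because $\gamma(y;j)$ is a polynomial in $y(1),\dots,y(j-1)$ with zero constant term, as noted in the proof of Theorem \ref{t gamma sum}.

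For the first piece I get
\[
w(j)y(j-1) = \gamma(y;j)y(j-1) + y(j)y(j-1).
\]
For the second piece, Lemma \ref{l gd} kills the $\gamma(y;j)\delta(y;j)$ term, leaving
\[
\delta(y;j)\bigl(y(j)P(y;j-1)+y(j)+y(j-1)\bigr).
\]
I then need two further identities: $\delta(y;j)P(y;j-1)=0$ and $\gamma(y;j)y(j-1) = \gamma(y;j)$.

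For the second identity I will use the recursion $\gamma(y;j) = y(j-1)\bigl(P(y;j-2)+\gamma(y;j-1)\bigr)$ from the proof of Theorem \ref{t gamma sum}. This shows $\gamma(y;j)$ is divisible by $y(j-1)$.

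For the first identity, I expect to show by induction that $\delta(y;j)$ lies in the ideal generated by $y(1),\dots,y(j-1)$. Equivalently, $\delta$ vanishes under any specialization with $y(1)=\dots=y(j-1)=0$, which is clear because the carries of $0+0+0$ are zero. Since $P(y;j-1)$ annihilates that ideal, $\delta(y;j)P(y;j-1)=0$. These facts give the linear recursion displayed at the start, with inhomogeneous term $\gamma(y;j)+y(j)y(j-1)$ and multiplier $y(j)+y(j-1)$.

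Unrolling from $\delta(y;1)=0$ yields
\[
\delta(y;j+1) = \sum_{k=0}^{j-1}\bigl(\gamma(y;j-k)+y(j-k)y(j-k-1)\bigr)\prod_{i=1}^{k}\bigl(y(j-i+1)+y(j-i)\bigr).
\]
This splits into $\delta_1(y;j)+\delta_2(y;j)$. The $k=j-1$ term of the second sum contains $y(0)=0$, which explains why $\delta_2$ runs only to $j-2$. I will double-check the index ranges against the base case $j=1$, where $\gamma(y;1)=0$.

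The main obstacle I anticipate is justifying these cancellations cleanly, especially $\delta(y;j)P(y;j-1)=0$. If the ideal argument is awkward in $R$, I will fall back on the specialization argument used in the proof of Lemma \ref{l gd}. Under that argument it suffices to check identities pointwise over $\mathbb{F}_2$: if $y(1)=\cdots=y(j-1)=0$, then every carry up to index $j$ vanishes.
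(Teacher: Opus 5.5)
Your proposal is correct and follows essentially the same route as the paper: you simplify the carry recursion for $\delta$ to $\delta(y;j+1)=\gamma(y;j)+y(j)y(j-1)+(y(j)+y(j-1))\delta(y;j)$ using Lemma \ref{l gd}, $y(j-1)P(y;j-1)=0$, $\delta(y;j)P(y;j-1)=0$ and $y(j-1)\gamma(y;j)=\gamma(y;j)$, and then unroll from $\delta(y;1)=0$. Your version is more careful than the paper's sketch in two places: you derive $y(j-1)\gamma(y;j)=\gamma(y;j)$ directly from the $\gamma$ recursion rather than from the closed formula of Theorem \ref{t gamma sum}, and you check that the $k=j-1$ boundary term vanishes, which accounts for the summation range of $\delta_2$.
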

\begin{proof}
By the definition of $\delta(y)$ as a carry sequence, we have 
 \[
\delta(y;j+1) =
\delta(y;j)(\gamma(y;j)+ y(j)P(y;j-1)+ y(j)+y(j-1))+ (\gamma(y;j)+ y(j)P(y;j-1)+ y(j))y(j-1).
 \]

 By the same reasoning in proof of Theorem \ref{t gamma sum} we see that 
 \[
 \delta(y;j) \in \mathbb{F}_2[y(1), \ldots, y(j-1)],
 \] so
 \[
 \delta(y;j) P(y;j-1)=0.
 \]
 It also follows from Theorem \ref{t gamma sum} that 
 \[
y(j-1) \gamma(y;j)=\gamma(y;j).
 \]
 Combing these facts with Lemma \ref{l gd} and 
 \[
 y(j-1)P(y;j-1)=0
 \]
allows us to simplify the recursive definition of $\delta(y;j+1)$ as 
 \[
\delta(y;j+1) =
 \gamma(y;j)+ y(j)y(j-1)+( y(j)+y(j-1))\delta(y;j). 
 \]
 This implies the formula in the lemma statement. This completes the proof. 
\end{proof}

\subsection{Formula for $\delta_2$}
\begin{lemma} \label{l sum2}
\begin{align*} \label{sum 2}
y(j-k-1)y(j-k) \prod_{i=0}^k (y(j-k+i+1)+y(j-k+i))   
& = y(j-k-1) V_2(y, j-k,k)
\end{align*}
\end{lemma}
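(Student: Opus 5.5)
The plan is to prove the identity by peeling off the factors of the product one at a time, from the left, using only the Boolean relation $y(i)^2=y(i)$ in $R$. Two elementary identities do all the work. For any index $m$,
\[
y(m)\bigl(y(m+1)+y(m)\bigr) = y(m)\bigl(1+y(m+1)\bigr)
\]
and
\[
\bigl(1+y(m)\bigr)\bigl(y(m+1)+y(m)\bigr) = \bigl(1+y(m)\bigr)y(m+1).
\]
Both follow from $y(m)^2=y(m)$ and $y(m)(1+y(m))=0$.

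Before starting, I will pin down the indexing. The factors $y(j-k+i+1)+y(j-k+i)$ for $0\le i\le k$ involve the indices $j-k,\dots,j+1$. So the right-hand side should be read as the alternating product $V_2$ starting at $a=j-k$ and running over these $k+2$ consecutive indices:
\[
y(j-k)\bigl(1+y(j-k+1)\bigr)y(j-k+2)\bigl(1+y(j-k+3)\bigr)\cdots,
\]
multiplied by the extra factor $y(j-k-1)$. That is, the $i$-th factor of $V_2$ with $a=j-k$ is $y(a+i-1)+\mathbf{1}(i\equiv 0 \pmod 2)$. I will state this reading explicitly, since the second argument "$k$" in the statement must really mean the appropriate endpoint.

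The proof is by induction on the number of factors processed, with the following invariant. After absorbing the factors $i=0,\dots,t-1$, the left-hand side equals
\[
y(j-k-1)\prod_{s=1}^{t+1}\Bigl(y(j-k+s-1)+\mathbf{1}(s\equiv 0 \pmod 2)\Bigr)\prod_{i=t}^{k}\bigl(y(j-k+i+1)+y(j-k+i)\bigr).
\]
Here the last alternating factor is $y(j-k+t)$ if $t$ is even and $1+y(j-k+t)$ if $t$ is odd.

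For the base case $t=0$, this is just the original expression: the alternating product then consists only of $y(j-k)$. For the inductive step, the next unprocessed factor is $y(j-k+t+1)+y(j-k+t)$, and the last alternating factor is either $y(m)$ or $1+y(m)$ with $m=j-k+t$. The first identity above handles the case where it is $y(m)$, and the second handles the case where it is $1+y(m)$. In either case the new last factor becomes $1+y(m+1)$ or $y(m+1)$ respectively, which preserves the alternation. The earlier factors are untouched, because each identity only uses the last alternating factor. After $t=k+1$ steps, all the difference factors are consumed, and what remains is $y(j-k-1)$ times the alternating product, which is exactly $y(j-k-1)V_2$.

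The factor $y(j-k-1)$ plays no role in the manipulation; it simply rides along. I expect no real obstacle in this argument. The only delicate points are getting the parity convention of $V_2$ and its endpoint indices to match the statement exactly, and checking the edge case $k=0$, where the product has a single factor and the claim reduces to the first identity with $m=j$.
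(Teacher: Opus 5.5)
Your proof is correct and is essentially the paper's argument: the paper also inducts on the number of difference factors absorbed. It uses that the current last factor is either $y(m)$ or $1+y(m)$, so that multiplying by $y(m+1)+y(m)$ amounts to multiplying by $1+y(m+1)$ or $y(m+1)$ respectively, with $y(j-k-1)$ simply carried along. Your reading of the right-hand side as the alternating product over the indices $j-k,\dots,j+1$ (i.e.\ $V_2(y,j-k,j+1)$) also matches the explicit expansion the paper writes down at the start of its own proof.
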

\begin{proof}
We have by definition
\[
y(j-k-1) V_2(y, j-k,k)= y(j-k-1)(\prod_{i=0}^{\lfloor \frac{k+1}{2} \rfloor}y(j-k+2i) \big) \big (\prod_{i=0}^{\lfloor \frac{k}{2}\rfloor}(1+y(j-k+1+2i)) \big).
\]
We claim for $L\geq 0$ that
\begin{equation} 
y(j-k)\prod_{i=0}^L (y(j-k+i+1)+y(j-k+i))  = (\prod_{i=0}^{\lfloor \frac{L+1}{2} \rfloor}y(j-k+2i) \big) \big (\prod_{i=0}^{\lfloor \frac{L}{2}\rfloor}(1+y(j-k+1+2i)) \big).
\end{equation}
We use induction on $L$. The statement is true for $L=0$. Assume it is true for some $L \geq 0$. 
Case $L$ is odd: By induction hypothesis, the right side has a factor of $y(j-k+L+1)$. Multiplying the right side by $y(j-k+L+2)+y(j-k+L+1)$ is thus equivalent to multiplying by 
\[
1+y(j-k+L+2)
\] 
because 
\[
y(j-k+L+1)(y(j-k+L+2)+y(j-k+L+1)) = y(j-k+L+1)(1+y(j-k+L+2)).
\]

Case $L$ is even: By induction hypothesis, the right side has a factor of $1+y(j-k+L+1)$. Multiplying the right side by $y(j-k+L+2)+y(j-k+L+1)$ is thus equivalent to multiplying by 
\[
y(j-k+L+2)
\] 
 because 
 \[
y(j-k+L+1)(1+y(j-k+L+1))=0.
 \]
This completes the induction step. Setting $L=k$ completes the proof. 
\end{proof}

\begin{theorem} \label{t delta2}
\[
\delta_2(y; j+1) = \sum_{i=1}^{j-1} y(i)V_2(i+1,j)
\]
\end{theorem}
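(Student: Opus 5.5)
Start from the definition of $\delta_2$ with $j$ replaced by $j+1$:
\[
\delta_2(y;j+1)=\sum_{k=0}^{j-1} y(j+1-k)\,y(j-k)\prod_{i=1}^{k}\bigl(y(j+2-i)+y(j+1-i)\bigr).
\]
The plan is to reindex each summand by the index of its lowest factor, namely $m=j-k$. The product then runs over the adjacent pairs $(m+1,m+2),\ldots,(j,j+1)$, and the summand becomes
\[
y(m)\,y(m+1)\prod_{t=m+1}^{j}\bigl(y(t+1)+y(t)\bigr).
\]
This is exactly the shape of the left-hand side of Lemma \ref{l sum2}, with $y(m)$ as the isolated leading factor and $y(m+1)$ starting the chain of pairwise sums. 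The exact offsets in the statement of Lemma \ref{l sum2} are written loosely, so I will use the claim proved inside it rather than its statement. That claim gives $y(a)\prod_{i=0}^{L}(y(a+i+1)+y(a+i))$ as the alternating product $y(a)(1+y(a+1))y(a+2)(1+y(a+3))\cdots$ running up to index $a+L+1$.

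Applying the claim with $a=m+1$ collapses the summand to $y(m)$ times an alternating product $y(m+1)(1+y(m+2))y(m+3)\cdots$. By the definition of $V_2$, which reads left to right starting at $a$ with $y(a)$, then $1+y(a+1)$, and so on, this is $y(m)\,V_2(y,m+1,\text{top})$. Summing over $m$ from $1$ to $j-1$ then gives the stated form $\sum_{i=1}^{j-1} y(i)V_2(i+1,j)$.

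The main obstacle is the endpoint bookkeeping, which has three parts. First, the upper index of $V_2$ must come out as $j$ and not $j+1$, which depends on exactly how far the chain of pair products reaches in the indexing of $\delta_2(\cdot;j+1)$. Second, the lemma's claim is stated for $L\ge 0$, so the $k=0$ term (no sum factors) must be checked by hand; it gives $y(j+1)y(j)$, which has to match or be absorbed into the convention. Third, the summation range must match $i=1,\ldots,j-1$.

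I would first recompute the $k=0$ and $k=1$ terms explicitly and compare them with $y(i)V_2(i+1,j)$ for $i=j-1$ and $i=j-2$. This pins down the index conventions, since the $\delta_2(y;j+1)$ of the statement may correspond to the $\delta_2(y;j)$ that actually appears in Lemma \ref{l delta sums}. With the shift fixed, the general term follows from the lemma's claim with $L=k-1$, and the proof is a direct reindexing.
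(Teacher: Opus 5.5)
Your method is the paper's: its proof is the single sentence that Lemma \ref{l sum2} applied to the definition of $\delta_2$ gives the result. Your reindexing $m=j-k$, followed by the claim inside that lemma with $a=m+1$ and $L=k-1$, is exactly that computation, and you do it correctly.

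The gap is in the step you assert, not the one you defer. Since $0\le k\le j-1$, your own reindexing has $m$ running over $1,\ldots,j$. The claim then yields an alternating product ending at index $a+L+1=j+1$. So what you have actually proved is
\[
\delta_2(y;j+1)=\sum_{m=1}^{j} y(m)\,V_2(y,m+1,j+1).
\]
The sentence ``summing over $m$ from $1$ to $j-1$ then gives the stated form'' therefore drops the $m=j$ term and lowers the top index from $j+1$ to $j$. The $k=0$ term needs no hand check. It is the empty-product case $L=-1$, namely $y(j)V_2(y,j+1,j+1)=y(j)y(j+1)$, and it is precisely that dropped $m=j$ term.

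The bookkeeping you postponed does not come out in favour of the statement as printed; it shows the statement is off by one. For $j=2$ the definition gives $\delta_2(y;3)=y(1)y(2)+y(2)y(3)+y(1)y(2)y(3)$. The stated right-hand side is $y(1)V_2(y,2,2)=y(1)y(2)$. The specialization $y(1)=0$, $y(2)=y(3)=1$ makes these $1$ and $0$.

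What is true is the display above with $j$ replaced by $j-1$:
\[
\delta_2(y;j)=\sum_{i=1}^{j-1} y(i)\,V_2(y,i+1,j).
\]
This is also what is actually needed, since Lemma \ref{l delta sums} reads $\delta(y;j+1)=\delta_1(y;j)+\delta_2(y;j)$. Your closing suspicion is exactly this, but it must be stated as the conclusion, with the theorem corrected, rather than as something that ``may'' hold. As a proof of the literal statement, the proposal cannot be completed.

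The paper's one-line proof does not address this either. Lemma \ref{l sum2} as printed has its product reaching index $j+1$ and the garbled $V_2(y,j-k,k)$. So the off-by-one originates in the source, not in your computation.
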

\begin{proof}
This follows from Lemma \ref{l sum2} applied to the definition of $\delta_2$.
\end{proof}

\subsection{Formula for $\delta_1$}

\begin{definition} 
Let $S \subseteq [1,t]$ be a set ordered as 
\[
S = ( S(1), S(2), \ldots, S(k).)
\]
with $S(i) < S(i+1)$. We denote the maximal index $k$ by $|S|$. Suppose $y \in R^\infty$. 
Define $y(S)$ to be 
\[
\prod_{i\in S} y(i).
\] 
Define the length $\ell(S)$ of a set $S$ to be 
\[
\ell(S) = \max(S) - \min(S)+1.
\]
We say that an interval $[a,b]$ where $b \geq a$ is a \emph{block} of $S$ if 
\[
[a,b] \subseteq S
\] 
and
\[
a-1 \notin S \text{ and } b+1 \notin S. 
\]

Define the block sequence $B$ of $S$ to be 
\[
B = ( B_1, \ldots, B_l)
\]
where $B_i = [a_i, b_i]$ is a block of $S$, $b_i < a_{i+1}-1$, and 
\[
\bigcup_{i=1}^l B_i = S. 
\]
We denote the maximal index $l$ by $|B|$.

We say that a block $[a,b]$ is an \emph{interior block in $[1,t]$} if $1\neq a$ and $t\neq b$. Otherwise we say that it is an \emph{exterior block in $[1,t]$}. 
\end{definition}

\begin{definition}
Define $G_{j}$ to be the set of subsets $S \subseteq [1,j-1]$ such that 
\[
\gamma(y;j) = \sum_{S \in G_j} y(S).
\] 

\end{definition}

\begin{remark} \label{r G}
From Theorem \ref{t gamma sum}, each $S\in G_j$ has a block sequence whose last block is an exterior block in $[1,j-1]$ of odd length.  
\end{remark}

\begin{definition} 
 Define a \emph{skip} of $B$ in $[1,t]$ to be a block for the set $[1,t] \backslash S$.  

Define $\lambda(S)$ to be either the smallest index $j$ such that 
\[
b_{i}+ 2 = a_{i+1}
\]
for all $i \geq j$, or to be $|B|$ if no such index exists.  

We call $S$ a \emph{skip-1 set in $[a,b]$} if $\min(S)=a$, $\max(S) = b$, and every block of $[a,b] \backslash S$ is of length 1. 

Define $\kappa(S)$ to be either the smallest index in $ [\lambda(S), |B|)$ such that 
\[
\ell(B_i) \equiv 1 \pmod 2, 
\]
for all $|B|>i >\kappa(S)$, or to be $|B|$ if no such index exists. 

Define $\epsilon(S,t)$ to be 
\[
\epsilon(S,t) = \begin{cases} 
1 &\text{ if } t \in S \\ 
0 &\text{ otherwise}
\end{cases}.
\]

Let $\mathrm{lead}(S,t) \subseteq [1,t]$ denote the set 
\[
\mathrm{lead}(S,t) =\bigcup_{i=\kappa(S)}^{|B|- \epsilon(S, t)} B_i.
\]
Thus $\mathrm{lead}(S,t)$ is a skip-1 set in $[a_{\kappa(S)}, b_{|B|-\epsilon(S)}]$. 

Let $\Lambda(t)$ denote the set  
\[
\Lambda(t) = \{ S \subseteq [1,t] \colon \ell(\mathrm{lead}(S,t)) \equiv 1 \text{ or } 2 \pmod 4 \}
\]
and let $\Lambda(t, a,b) \subseteq \Lambda(t)$ denote the set of those $S$ such that 
\[
\min(\mathrm{lead}(S,t)) = a \text{ and } \max(\mathrm{lead}(S,t)) = b.
\]
For $i \in \{ 0,1\}$, let $T_i(a,b)$ denote the set of skip-1 sequences $Q$ in $[a,b]$ such that the length of the first block of $Q$ (i.e. the block that contains $a$) has length equivalent to $i \pmod 2$; such that every other block has odd length; and such that 
\[
\ell(Q) = i-1 \pmod 4. 
\]

%For $i \in \{ 0,1\}$m let  $\Lambda_i(t, a,b) \subseteq \Lambda(t, a,b) $ denote the set those $S$ such that the first block of $\mathrm{lead}(S,t)$ (the block containing $a$) has length equivalent to $i \pmod 2$.   
%Define $w_1(S,t) \in \{ 0,1\}$ to be 
%\[
%w_1(S,t) \equiv 
%\begin{cases} 
%1 &\text{ if } \ell(\mathrm{lead}(S,t)) \equiv 1 \text{ or } 2 \pmod 4 \\ 
%0 &\text{ otherwise}. 
%\end{cases}
%\]
%\[
%w_1(S) \equiv  \sum_{i=\kappa(S)}^{|B| - \epsilon(S)} \big \lceil \frac{\ell(B_i)}{2} \big \rceil \pmod 2. 
%\]
\end{definition}

\begin{lemma} \label{l ceil mod 4}
\[
\lceil \frac{L}{2}\rceil \equiv \begin{cases} 1 \pmod 2 \text{ if } L_1 \equiv 1 \text{ or } 2 \pmod 4\\ 
0 \pmod 2 \text{ otherwise.}
\end{cases}
\]
\end{lemma}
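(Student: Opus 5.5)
The statement contains a typo: $L_1$ should be read as $L$, so the claim concerns a single nonnegative integer $L$. I would prove it by a direct case analysis on the residue of $L$ modulo $4$. This is an elementary arithmetic fact and no earlier result of the paper is needed.

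Write $L = 4m + r$ with $m \geq 0$ an integer and $r \in \{0,1,2,3\}$. Then
\[
\left\lceil \frac{L}{2} \right\rceil = 2m + \left\lceil \frac{r}{2} \right\rceil,
\]
because $4m/2 = 2m$ is an integer and can be pulled out of the ceiling. Modulo $2$ the term $2m$ vanishes, so
\[
\left\lceil \frac{L}{2} \right\rceil \equiv \left\lceil \frac{r}{2} \right\rceil \pmod 2,
\]
and it remains to evaluate $\lceil r/2 \rceil$ for the four residues:
\begin{itemize}
\item $r=0$ gives $0$;
\item $r=1$ gives $1$;
\item $r=2$ gives $1$;
\item $r=3$ gives $2 \equiv 0$.
\end{itemize}
So $\lceil L/2 \rceil$ is odd exactly when $L \equiv 1$ or $2 \pmod 4$, and even otherwise. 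This is the claim.

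There is no genuine obstacle. The only point needing care is justifying that the ceiling commutes with adding the integer $2m$, which holds since $\lceil x + n \rceil = \lceil x \rceil + n$ for every integer $n$. In the surrounding paper, the lemma will presumably be applied to block lengths $L = \ell(B_i)$. Summed over the blocks of $\mathrm{lead}(S,t)$, it turns the parity of $\sum \lceil \ell(B_i)/2 \rceil$ into the condition defining $\Lambda(t)$.
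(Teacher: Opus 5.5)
Your proof is correct and takes the same approach as the paper, which simply says the claim follows from checking the cases of $L$ modulo $4$. You are also right that $L_1$ is a typo for $L$.
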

\begin{proof} 
This follows from straightforward checking of the cases $L \mod 4$. This completes the proof.  
\end{proof} 

\begin{lemma} \label{l add ceils}
For any integers $L_i \geq 0$,
\[
\sum_{i=1}^m \lceil \frac{L_i}{2}\rceil \equiv \lceil \frac{m-1+\sum_{i=1}^m L_i}{2}\rceil  \pmod 2. 
\] 
\end{lemma}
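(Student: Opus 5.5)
I would argue by induction on $m$, reducing everything to a two-term identity for the function $f(L)=\lceil L/2\rceil \bmod 2$. Lemma \ref{l ceil mod 4} already describes $f$ by the residue of $L$ modulo $4$. For $m=1$ the statement reads $\lceil L_1/2\rceil\equiv\lceil L_1/2\rceil$, which is trivial. For the inductive step, set
\[
M=m-2+\sum_{i=1}^{m-1}L_i .
\]
The induction hypothesis gives $\sum_{i=1}^{m-1}\lceil L_i/2\rceil\equiv\lceil M/2\rceil \pmod 2$. Since $m-1+\sum_{i=1}^m L_i=M+L_m+1$, it remains to prove the two-term identity
\[
\Big\lceil \frac{M}{2}\Big\rceil+\Big\lceil \frac{L_m}{2}\Big\rceil\equiv\Big\lceil \frac{M+L_m+1}{2}\Big\rceil \pmod 2 .
\]

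The natural way to check this identity is to write whichever of the two arguments is odd as $2p+1$. Say $M=2p+1$. Then $\lceil M/2\rceil=p+1$, and $M+L_m+1=2(p+1)+L_m$, so
\[
\Big\lceil \frac{M+L_m+1}{2}\Big\rceil=p+1+\Big\lceil \frac{L_m}{2}\Big\rceil ,
\]
which is exactly the identity. The case $L_m$ odd is symmetric.

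\textbf{Main obstacle.} The key step is the case in which both $M$ and $L_m$ are even, and here the identity genuinely fails. Take $M=L_m=0$: the left side is $0$ and the right side is $\lceil 1/2\rceil=1$. This case can occur in the statement itself. With $m=2$ and $L_1=L_2=0$, the left side of the lemma is $0$ and the right side is $1$. With $L_1=L_2=2$, the left side is $0$ and the right side is $\lceil 5/2\rceil=3$, which is odd. So the lemma is \emph{false} as literally stated for arbitrary $L_i\ge 0$. No proof plan can establish it in that generality, and the induction above breaks precisely at this case.

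\textbf{What the argument does prove.} The induction shows that the congruence holds whenever, at every step, $M$ or $L_m$ is odd. This is guaranteed, for example, if all $L_i$ with $i\ge 2$ are odd, with no parity condition on $L_1$. This is exactly the situation in which the lemma is meant to be used. The $L_i$ are the lengths of the blocks of a skip-1 set such as $\mathrm{lead}(S,t)$ or the sets in $T_i(a,b)$, and there every block other than the first has odd length. Under that hypothesis the inductive step always has $L_m$ odd and goes through as shown. The congruence then says that $\sum_i\lceil\ell(B_i)/2\rceil$ has the parity of $\lceil \ell/2\rceil$, where $\ell=m-1+\sum_i\ell(B_i)$ is the length of the whole set including its $m-1$ single skips. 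I would therefore prove the lemma in this corrected form, and state explicitly that the statement as written needs this parity hypothesis.
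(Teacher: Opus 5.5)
You are right, and your argument is the paper's own proof with its error located. The paper also inducts on $m$. It treats $m=2$ as a second base case, which it says follows from ``checking cases mod 4'' with Lemma \ref{l ceil mod 4}. Its inductive step then applies that two-term case to $L_{m+1}$ and $m-1+\sum_{i=1}^m L_i$, which is your reduction via $M$. That base case is false. The congruence $\lceil L_1/2\rceil+\lceil L_2/2\rceil\equiv\lceil (L_1+L_2+1)/2\rceil \pmod 2$ fails for every pair of even $L_1,L_2$, which covers all residue pairs $(0,0),(0,2),(2,0),(2,2)$ mod $4$. It holds, even as an integer equality, whenever one of them is odd, exactly as your $2p+1$ computation shows. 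So your counterexamples stand: the lemma is false as stated, and the paper's induction breaks where you say it does.

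Your repaired version is correct, and a direct computation makes it sharp without induction. Let $o$ and $e$ be the numbers of odd and even $L_i$. Then $\sum_i\lceil L_i/2\rceil=\tfrac12\bigl(\sum_i L_i+o\bigr)$, and since $\sum_i L_i+o$ is even,
\[
\Big\lceil \frac{m-1+\sum_i L_i}{2}\Big\rceil=\frac{1}{2}\Big(\sum_i L_i+o\Big)+\Big\lceil\frac{e-1}{2}\Big\rceil .
\]
So the congruence holds if and only if $e\equiv 0$ or $1 \pmod 4$. When $e\le 1$, which is your hypothesis, it is an equality of integers.

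One caution about your remark that the application in Lemma \ref{l 1Lambda} satisfies your hypothesis. As $\kappa(S)$ is literally defined, oddness is imposed only for $\kappa(S)<i<|B|$. So when $\epsilon(S,t)=0$, the last block of $\mathrm{lead}(S,t)$ can be a second even block, and your hypothesis fails. In that case the count in that proof is also wrong. A choice of $M$ in an earlier block makes the even last block an interior block of $S_2$, so only choices in the last block are valid.

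For example, take $S=\{1,2,3,4,6,7\}$ and $t=8$. The monomial $y(S)$ has coefficient $1$ on the left side of Lemma \ref{l 1Lambda}. Yet $\ell(\mathrm{lead}(S,8))=7$, so $S\notin\Lambda(8)$. The oddness condition in $\kappa(S)$ should run over $\kappa(S)<i\le |B|-\epsilon(S,t)$. With that change, every block of $\mathrm{lead}(S,t)$ after the first is odd, and your corrected lemma applies.
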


\begin{proof} 
We use induction on $m$. The statement is true for $m=1$. Checking cases mod 4 using Lemma \ref{l ceil mod 4} also shows it is true for $m=2$. Assume it is true for some $m \geq 2$. Then 
\begin{align*} 
\sum_{i=1}^{m+1} \lceil \frac{L_i}{2}\rceil &=  \lceil \frac{L_{m+1}}{2}\rceil +\sum_{i=1}^m \lceil \frac{L_i}{2}\rceil \\ 
&\equiv \lceil \frac{L_{m+1}}{2}\rceil +\lceil \frac{m-1+\sum_{i=1}^m L_i}{2}\rceil \pmod 2\\
&\equiv  \lceil \frac{1+L_{m+1} +m-1 \sum_{i=1}^m L_i}{2}\rceil \pmod 2\\
\end{align*}
where on the last line we have used the statement for $m=2$. This completes the induction step and the proof. 
\end{proof}

\begin{lemma} \label{l Ij}
Let $I_j$ denote the set of subsets $S$ of $[1,j]$ such that every interior block $B_i$ of $S$ in $[1,j]$ has odd length and every skip of $S$ in $[1,j]$ has size 1, and such that there is at least one such skip.  
Then
\[
\prod_{k=1}^{j-1} (y(j-k+1)+ y(j-k)) = \sum_{S \in I_j} y(S).
\]
Note that $I_1$ consists of exactly one subset $S = \emptyset$. 
\end{lemma}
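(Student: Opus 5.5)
The plan is to expand the product directly and count, modulo $2$, how many ways each monomial arises. Reindexing with $m=j-k$, the left side is $\prod_{m=1}^{j-1}(y(m)+y(m+1))$. Expanding means choosing, for each factor $m$, one index $f(m)\in\{m,m+1\}$. The resulting monomial is $y(f([1,j-1]))$, because $y(i)^2=y(i)$ in $R$. So the coefficient of $y(S)$ is the parity of the number $N(S)$ of choice functions $f$ with image exactly $S$. I would show that $N(S)$ is odd precisely when $S\in I_j$. The case $j=1$ is the empty product $1=y(\emptyset)$, matching the remark that $I_1=\{\emptyset\}$, so assume $j\ge 2$.

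\textbf{Vanishing cases.} If $f$ has image $S$, then for every $m$ at least one of $m,m+1$ lies in $S$. Hence every skip of $S$ in $[1,j]$ has size $1$, including skips at either end, and in particular $S\neq\emptyset$. Otherwise $N(S)=0$.

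\textbf{Factorization over blocks.} Assume all skips have size $1$. If $m\notin S$, then factor $m-1$ is forced to choose $m-1$ and factor $m$ is forced to choose $m+1$. So the factors split into disjoint groups, one per block $[a,b]$: the factors $m\in[a-1,b]\cap[1,j-1]$. The factor $a-1$ (present only if $a>1$) is forced to $a$, and the factor $b$ (present only if $b<j$) is forced to $b$. The free factors are $m\in[a,b-1]$, and they must together with the forced ones cover $[a,b]$. Thus $N(S)=\prod_{\text{blocks}}N_{[a,b]}$, and it suffices to compute each block count mod $2$.

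\textbf{Block count.} Encode the choices of the $L-1$ free factors of a block of length $L=b-a+1$ as a word in $\{\mathrm{L},\mathrm{R}\}$, where $\mathrm{L}$ means factor $m$ picks $m$ and $\mathrm{R}$ means it picks $m+1$. An interior point $m+1$ is uncovered exactly when factor $m$ picks $\mathrm{L}$ and factor $m+1$ picks $\mathrm{R}$. So the admissible words are those avoiding the pattern $\mathrm{LR}$, which are exactly the words $\mathrm{R}^p\mathrm{L}^q$ with $p+q=L-1$.
\begin{itemize}
\item For an interior block, both endpoints are already covered by forced factors, so there are $L$ admissible words. This count is odd iff $L$ is odd.
\item If $a=1<b$ and $b<j$, point $1$ is covered only if the first free factor picks $\mathrm{L}$. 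This forces the word $\mathrm{L}^{L-1}$, giving count $1$. Symmetrically, a block with $b=j>a$ and $a>1$ has count $1$. A length-one exterior block trivially has count $1$.
\item If the block is all of $[1,j]$, the word would have to start with $\mathrm{L}$ and end with $\mathrm{R}$ while avoiding $\mathrm{LR}$. This is impossible, so the count is $0$. This is exactly why $I_j$ requires at least one skip.
\end{itemize}

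Multiplying the block counts, $N(S)$ is odd exactly when all skips have size $1$, at least one skip exists, and every interior block has odd length. These are precisely the conditions defining $I_j$, which proves the lemma.

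The main obstacle I expect is the bookkeeping at the boundary: making sure that exterior blocks of arbitrary length contribute $1$, and that the boundary skips $\{1\}$ and $\{j\}$ are handled by the forced choices. Counting the $\mathrm{LR}$-avoiding words is itself routine.
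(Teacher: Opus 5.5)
Your proposal is correct and is essentially the paper's argument: expand the product, encode each term as an $\mathrm{L}/\mathrm{R}$ word in which an occurrence of $\mathrm{LR}$ marks a skip, and note that an interior block of length $\ell$ comes from exactly $\ell$ splittings $\mathrm{R}^p\mathrm{L}^q$, while all other choices are forced. Your factorization through forced choices also handles the boundary cases explicitly: an exterior block contributes $1$, and $S=[1,j]$ contributes $0$, which is why a skip is required. The paper treats these loosely; its description of the boundary skips via $r_1=0$ or $l_m=0$ actually has the conditions reversed.
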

\begin{proof}
 In the expansion of the left side of the lemma statement, a monomial $t$ corresponds to a word $w$ of length $j-1$ in letters $L$ and $R$, where the $k$-the letter is $L$ if $y(j-k+1)$ is chosen from $(y(j-k+1)+ y(j-k))$, and $R$ otherwise. We write 
 \[
 w = \prod_{i=1}^m R^{r_i} L^{l_i}
 \]
 where $r_1\geq 0, l_m \geq 0$, and all other $r_i, l_i \geq 1$. A skip occurs exactly when $LR$ is a subword of $w$ or when $r_1=0$ or when $l_m =0$. These skips are all of length 1. The set $\{ R^{r_i}L^{l_i}\colon 1 \leq i \leq m\}$ is in bijection with the set of blocks for the set $S$ with $t = a(S)$. For a fixed $i$, suppose that the interior block $B_i$ corresponding to $ R^{r_i}L^{l_i}$ contributes a factor of 
 \begin{equation} \label{factor contribution}
\prod_{v=a_i}^{b_i} x_{v}
 \end{equation}
 to the monomial $t$. This block has length
 \[
\ell(B_i)= r_i + l_i -1.
 \] 
For this $i$, we may alter 
\begin{align*}
r_i & \mapsto u_i \\ 
l_i & \mapsto \ell(B_i)+1-u_i \\ 
\end{align*}
for $1 \leq u_i \leq \ell(B_i)$ and still preserve the block $B_i$ and thus also preserve the factor contribution \eqref{factor contribution}. Thus the total number of times the monomial $t$ appears in the expansion is 
\[
\prod_{B_i \text{ is an interior block for $S$ in $[1,j]$}} \ell(B_i).
\]
Thus the coefficient of $t$ is non-zero if and only if all such $\ell(B_i)$ are odd (or if there are no interior blocks). This completes the proof.  
\end{proof}

\begin{lemma}  \label{l 1Lambda}
\[
\sum_{k=0}^{j-1} \gamma(y;j-k) \prod_{i=1}^k (y(j-i+1)+y(j-i))  = \sum_{S \in \Lambda(j)} y(S).  
\]
\end{lemma}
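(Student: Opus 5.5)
We expand the left side as a sum over monomials and count how often each monomial $y(S)$ occurs, modulo 2, in the style of the proofs of Theorem \ref{t gamma sum} and Lemma \ref{l Ij}. For each $k$, write $\gamma(y;j-k)=\sum_{S'\in G_{j-k}} y(S')$ with $S'\subseteq[1,j-k-1]$. By Remark \ref{r G} (and the explicit form in Theorem \ref{t gamma sum}), $S'$ consists of a set $[1,m]\setminus\{\ldots\}$ whose complement pattern is encoded by $P(y;m)$, followed by a final block ending at $j-k-1$ of odd length $2k'+1$. Lemma \ref{l Ij}, after shifting indices to the interval $[j-k,j]$, shows that $\prod_{i=1}^k(y(j-i+1)+y(j-i))$ equals $\sum y(S'')$ over sets $S''\subseteq[j-k,j]$ whose skips all have size $1$ and whose interior blocks have odd length.

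Because $y(i)^2=y(i)$, the product $y(S')y(S'')=y(S'\cup S'')$. The first step is to characterise the union. The last block of $S'$ ends at $j-k-1$, and $S''$ lives on $[j-k,j]$, so the union glues the final odd block of $S'$ to the first block of $S''$ whenever $j-k\in S''$. Using $P(y;\cdot)$, every index below the start of the final $\gamma$-block is forced to be absent: a monomial in $P(y;m)$ is a subset of $[1,m]$, each appearing with sign $+1$ over $\mathbb{F}_2$. So a given $S$ can arise from several triples $(k,k',\text{split})$.

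The main step is to fix $S\subseteq[1,j]$ and count the decompositions $S=S'\cup S''$. The tail of $S$ must be a skip-1 pattern with odd interior blocks, and this matches the definitions of $\lambda(S)$ and $\kappa(S)$. The cut point $j-k$ may be chosen at any position inside the block $B_{\kappa(S)}$ region or later, consistent with the parity constraints. Any block of the lead lying strictly inside the $S''$ part contributes a multiplicity that is odd exactly when its length is odd, as in Lemma \ref{l Ij}. The remaining freedom is the placement of the cut together with the odd-length choice $2k'+1$ of the $\gamma$-block. I would show that the number of admissible choices equals $\sum_{i=\kappa(S)}^{|B|-\epsilon(S,j)}\lceil \ell(B_i)/2\rceil$: each block of length $L$ offers $\lceil L/2\rceil$ admissible cut positions with odd-length prefixes. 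The case $\epsilon(S,j)=1$ is handled separately, because the final block touching $j$ is exterior, has unrestricted length, and contributes multiplicity $1$.

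Having reduced the coefficient of $y(S)$ to that sum mod 2, I apply Lemma \ref{l add ceils} with $m$ equal to the number of blocks of $\mathrm{lead}(S,j)$. Since the lead is a skip-1 set, $m-1+\sum L_i=\ell(\mathrm{lead}(S,j))$. Lemma \ref{l ceil mod 4} then says the coefficient is $1$ iff $\ell(\mathrm{lead}(S,j))\equiv 1,2\pmod 4$, which is exactly $S\in\Lambda(j)$.

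I expect the main obstacle to be the counting step. The difficulty is verifying precisely that the decompositions are in bijection with the cut positions counted by $\lceil \ell(B_i)/2\rceil$ for the blocks from $\kappa(S)$ on, and that blocks before $\kappa(S)$ are forced to come from $P$ in $\gamma$ with no further multiplicity. I would first check small $j$ by hand to pin down the boundary conventions ($\epsilon$, and the $k=0$ term $\gamma(y;j)$).
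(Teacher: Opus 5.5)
Your plan is the paper's own proof. You expand each term via Remark~\ref{r G} and Lemma~\ref{l Ij}, fix $S$, and count admissible cut points $M=j-k-1$. For fixed $S$ and $k$ the split is forced, $S'=S\cap[1,j-k-1]$, so the coefficient of $y(S)$ is just the number of admissible $k$. You then assign $\lceil \ell(B_i)/2\rceil$ odd-length prefixes to each block and finish with Lemmas~\ref{l add ceils} and~\ref{l ceil mod 4}. The trouble is the step you defer, and the paper's proof makes the same identification: when $j\notin S$, the admissible cuts are \emph{not} counted by $\sum_{i=\kappa(S)}^{|B|-\epsilon(S,j)}\lceil \ell(B_i)/2\rceil$.

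For a cut $M\in B_i$, the set $S''=S\cap[M+1,j]$ must lie in the shifted $I$. If $j\notin S$, this forces two conditions:
(a) $\max S=j-1$, because $[\max S+1,j]$ is a skip of $S''$;
(b) $\ell(B_{|B|})$ is odd whenever $i<|B|$, because $B_{|B|}$ then contains neither $M+1$ nor $j$, so it is an interior block of $S''$. This is precisely your own observation that blocks strictly inside the $S''$ part must be odd.
Neither condition is encoded in the definitions: $\lambda(S)$ only compares consecutive blocks, and $\kappa(S)$ only tests indices $i<|B|$. So it is $\epsilon(S,j)=0$, not $\epsilon(S,j)=1$, that needs separate treatment, and there the claimed match with $\lambda$ and $\kappa$ is false.

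Concretely, for $j=3$ the left side is (the $k=2$ term vanishes since $\gamma(y;1)=0$)
\[
\gamma(y;3)+\gamma(y;2)\bigl(y(3)+y(2)\bigr)=y(2)+y(1)y(2)+y(1)y(3).
\]
It has no $y(1)$ term, yet $\mathrm{lead}(\{1\},3)=\{1\}$ has length $1$, so $\{1\}\in\Lambda(3)$. For $j=5$ and $S=\{1,3,4\}$, the cut $M=1$ is killed because $y(3)y(4)$ arises twice in $(y(5)+y(4))(y(4)+y(3))(y(3)+y(2))$. So only $k=1$ contributes and the coefficient is $1$, whereas $\ell(\mathrm{lead}(S,5))=4$. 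Hence the lemma is false with $\Lambda(j)$ as defined, and no completion of your counting can prove it.

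What the counting actually gives is this. For $j\in S$ the coefficient is $\sum_{i=\kappa(S)}^{|B|-1}\lceil \ell(B_i)/2\rceil$, as claimed. For $j\notin S$ it is $0$ unless $\max S=j-1$, and otherwise it is $\sum_{i=\kappa'}^{|B|}\lceil \ell(B_i)/2\rceil$, where $\kappa'$ is defined like $\kappa(S)$ but also demands $\ell(B_{|B|})$ odd. This is consistent with Theorem~\ref{t 1 2 sum}: all of its terms have $\max S\ge t-1$, and at $t=3$ it returns exactly the three monomials above. With $\Lambda(j)$ redefined accordingly, your argument goes through, with one further check. Lemma~\ref{l add ceils} fails in general: $L_1=L_2=2$ gives $1+1\not\equiv\lceil 5/2\rceil \pmod 2$. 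It does hold when at most one $L_i$ is even, which is the situation for the corrected lead, since all its blocks after the first are odd.

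A smaller slip: in the monomial expansion $P(y;m)=\sum_{T\subseteq[1,m]}y(T)$. So the part of $S'$ below its final block is arbitrary, apart from the single excluded index just before that block; nothing there is ``forced to be absent.''
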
 
\begin{proof}
By Lemma \ref{l Ij} and Remark \ref{r G},
 \[
 \gamma(y;j-k) \prod_{i=1}^k (y(j-i+1)+y(j-i)) = \sum_{\substack{S_1 \in G_{j-k}\\ S_2 \in I_{[j-k+1,j]}}} y(S_1)y(S_2).
 \]
 Consider $S \subseteq[1,j]$ with block sequence $(B_1, \ldots, B_l)$. We count the number of $k$ for which we have 
 \[
 S = S_1 \cup S_2
 \] 
 with $S_1 \in G_{j-k}$ and $S_2 \in I_{[j-k+1,j]}$. For such a pair $(S_2, S_2)$, let $M$ denote the greatest element of $S_1$. Then $M$ must appear in a block $B_i$ of $S$ with $i \geq \lambda(S)$ (for otherwise $S_2$ would then have a skip of length greater than 1, contrary to its definition). Also we must have $ |B| \geq i \geq \kappa(S)$ (for otherwise $S_2$ would have an interior block of even length, also contrary to its definition). Note that we can have $i=|B|$ only in the case that the last block of $B$ does not contain $j$, for otherwise  $S_2$ would then have no skip. (This is the meaning of $\epsilon(S)$.) This block sequence is the definition of $\mathrm{lead}(S,j)$:
 \[
  \bigcup_{i=\kappa(S}^{|B| - \epsilon(S)}B_i= \mathrm{lead}(S,j)
 \]
  
 Thus suppose $M \in B_i = [a_i, b_i]$. Thus the last block of $S_1$ is of the form $[a_i, M]$, and since it has odd length by definition, the possible choices for $M$ are $a_i, a_i+2, a_i+4, \ldots, c_i$ with 
 \[
 c_i = \begin{cases} 
 b_i &\text{ if } b_i \equiv a_i \pmod 2 \\ 
 b_i -1 &\text{ otherwise}. 
 \end{cases} 
 \]    
 The number of such choices is thus
 \[
 \lceil \frac{\ell(B_i)}{2} \rceil.
 \]
 Adding these up for all the blocks in $\mathrm{lead}(S,j)$ gives
 \begin{align*}
\sum_{i=\kappa(S)}^{|B| - \epsilon(S)}  \lceil \frac{\ell(B_i)}{2} \rceil  &\equiv  \lceil  \frac{|B| - \epsilon(S)-\kappa(S)+\sum_{i=\kappa(S}^{|B| - \epsilon(S)} \ell(B_i)}{2}\rceil \pmod 2\\
&\equiv   \lceil \frac{\ell(\mathrm{lead}(S,j)}{2} \rceil \pmod 2. 
 \end{align*}
 where we have used Lemma \ref{l add ceils}. This is non-zero if and only if 
 \[
 \ell(\mathrm{lead}(S,j)) \equiv 1\text{ or } 2 \pmod 4.
 \]
 Thus the sets $S$ that have a non-zero contribution are exactly those in $\Lambda(j)$. This completes the proof. 
\end{proof}

\begin{lemma} \label{l V}
Recall that $T_1(a,b)$ denotes the set of skip-1 sets $S$ in $[a,b]$ such that every block of $S$ has odd length.   Then 
\[
\sum_{S \in T_1(a,b)} y(S) =  V_1(y,a,b). 
\]
\end{lemma}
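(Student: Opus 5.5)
The plan is to expand the product $V_1(y,a,b)$ monomial by monomial and match the monomials bijectively with the sets in $T_1(a,b)$. Reindexing by the offset $d=b-v$ of a position $v\in[a,b]$ (so $d=i-1$ in the definition), the factor attached to $v$ is $y(v)$ when $d$ is even and $1+y(v)$ when $d$ is odd. Hence
\[
V_1(y,a,b)=\prod_{\substack{a\le v\le b\\ b-v \text{ even}}} y(v)\prod_{\substack{a\le v\le b\\ b-v \text{ odd}}}(1+y(v))=\sum_{S} y(S),
\]
where $S$ ranges over all sets containing every even-offset position $b,b-2,b-4,\ldots$ of $[a,b]$, together with an arbitrary subset of the odd-offset positions $b-1,b-3,\ldots$. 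Distinct such $S$ give distinct monomials $y(S)$ in $R$. So there is no cancellation to worry about, in contrast with Lemma \ref{l Ij}, and it remains to identify this family of sets with $T_1(a,b)$.

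First I will show that every such $S$ lies in $T_1(a,b)$. Its maximum is $b$. Any element of $[a,b]\setminus S$ has odd offset, and both of its neighbours have even offset and so lie in $S$. Thus every skip has length $1$. Each block of $S$ is bounded on each side either by such a skip or by an endpoint of $[a,b]$. In the generic case it begins and ends at even-offset positions, which gives odd length.

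Conversely, take $S\in T_1(a,b)$ and argue from the top down. The last block contains $b$ and has odd length, so it begins at an even offset. The skip below it is a single position of odd offset. The next block therefore ends at an even offset, and since its length is odd it also begins at an even offset. Inducting down the block sequence, every even-offset position lies in $S$, and $S$ has exactly the form produced by the expansion. These two directions give a bijection, and the lemma follows.

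The step I expect to need care is the bottom endpoint $a$, that is, the condition $\min(S)=a$. When $b-a$ is even, $a$ is itself an even-offset position, so it is forced into $S$ automatically, and the first block starts at an even offset as required. When $b-a$ is odd, the factor at $a$ is $1+y(a)$. If $a\in S$, the block containing $a$ would start at an odd offset and end at an even one, so it would have even length; hence $T_1(a,b)$ is then empty, while $V_1$ is not zero. I will therefore check against the intended use (in $\Lambda(t,a,b)$, where lead sets with odd blocks have $\ell\equiv 1 \pmod 2$) that the lemma is only needed for $b\equiv a \pmod 2$, and state the proof under that hypothesis. In that case the bijection above is complete, because both the minimum condition and the odd length of the first block follow from $a$ having even offset.
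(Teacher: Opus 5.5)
Your proof is correct, and it takes a somewhat different route from the paper.

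\textbf{The paper's route.} It argues by induction on $b-a$ in steps of $2$. It splits $T_1(a,b)$ by whether the top block is $[b,b]$ (so $b-1\notin S$) or has length at least $3$ (so $b-1,b\in S$). It identifies each part with $T_1(a,b-2)$ after deleting $b$, respectively $b-1$ and $b$. It then matches this with the recurrence $V_1(y,a,b)=y(b)(1+y(b-1))V_1(y,a,b-2)$.

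\textbf{Your route.} You expand the whole product at once. You characterize $T_1(a,b)$ directly as the sets containing every position $v$ with $b-v$ even, together with an arbitrary subset of the remaining positions. The paper's induction is this bijection unrolled two positions at a time: its case split is the choice between $1$ and $y(b-1)$ in the factor $1+y(b-1)$. Your version avoids the induction and the two auxiliary bijections onto $T_1(a,b-2)$, and it makes the structure of $T_1(a,b)$ explicit.

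\textbf{The parity issue.} Your approach also makes clear something the paper only notes in passing. If $b-a$ is odd, then $T_1(a,b)=\emptyset$ but $V_1(y,a,b)$ is not zero in general; for example $V_1(y,1,2)=y(2)(1+y(1))$ while $T_1(1,2)=\emptyset$. So the identity as literally stated fails in that case. The lemma must be read with the hypothesis $a\equiv b \pmod 2$. That is all the later argument uses, since there $b-a\equiv 0 \pmod 4$. The paper's induction, which starts at $a=b$ and steps by $2$, also covers only that case, so restricting to it is the right call.

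\textbf{A small inaccuracy.} Your sentence that distinct $S$ give distinct elements $y(S)$ of $R$ is not true for arbitrary $y\in R^\infty$ (take $y(1)=y(2)$). You do not need it, though. Distributivity already writes $V_1(y,a,b)$ as a sum indexed by your family of sets, each occurring once, so equality of the two index families is enough.
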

\begin{proof} 
Note that the assumption that every block of a skip-1 set in $[a,b]$ has odd length implies that $a \equiv b \mod 2$.   

%\[
%\#\{ i \colon \ell(B_i) \equiv 1 \pmod 4 \} \equiv \frac{b-a}{2}+1 \pmod 2.
%\]

We use induction on $b-a$. The statement is true when $a=b$ for then $T_1(a,a) = (a)$ and $V_1(x,a,a) = x(a)$. Assume it is true for all $a,b$ with $a-b \leq 2n$ for some $n \geq 0$. Assume $b=a+2n+2$. Then we partition $T(a,b)$ into subsets $T_1^{(1)}(a,b)$ and $T_1^{>1}(a,b)$ where $T_1^{(1)}(a,b)$ consists of those sets $S$ whose block sequence contains the block $[b,b]$, and $T_1^{>1}(a,b)$ consists of the remaining sets. Then 
\begin{align*}
\sum_{S \in T_1^{(1)}(a,b)} y(S)  &= y(b)\sum_{S \in T_1^{(1)}(a,b-2)} y(S) \\ 
&= y(b) V_1(x,a,b-2)
\end{align*}
by the induction hypothesis. And 
\begin{align*}
\sum_{S \in T_1^{>1}(a,b)} y(S)  &= y(b)y(b-1)\sum_{S \in T_1(a,b-2)} y(S) \\ 
&= y(b) y(b-1)V(y,a,b-2).
\end{align*}
Adding the two results gives 
\[
y(b)(1+y(b-1))V_1(y,a,b-2) = V_1(y,a,b). 
\]
This completes the induction step and the proof. 
\end{proof}

\begin{theorem} \label{t 1 2 sum}
\begin{align}
\sum_{S \in \Lambda(t)} y(S)=& \sum_{j=0}^{\lfloor \frac{t-3}{4}\rfloor}\sum_{i =-1}^{t-4j-4}P(y,1,i) V_1(y,i+2,i+3+4j) U(y,i+5+4j,t) \label{l 2 sum}\\
& + \sum_{i=0}^{\lfloor \frac{t-2}{4}\rfloor} V_1(y,1,4i+1) U(4i+3, t) \label{l 1 sum}
\end{align}
where we denote $P(1,-1) = P(1,0) = U(t+1,t) = 1$. 
\end{theorem}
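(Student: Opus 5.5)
Classify each $S \in \Lambda(t)$ by the interval $[a,b]$ spanned by $\mathrm{lead}(S,t)$, i.e. write $\sum_{S\in\Lambda(t)} y(S) = \sum_{a,b}\sum_{S\in \Lambda(t,a,b)} y(S)$. For each such $[a,b]$, the set $S$ factors into three disjoint pieces: the part of $S$ to the left of $a$, the lead $\mathrm{lead}(S,t)$ itself, and the part to the right of $b$. From the definitions of $\lambda(S)$, $\kappa(S)$ and $\epsilon(S,t)$, the part to the right of $b$ is either empty (when $b=t$, $\epsilon=0$) or is exactly the last block $[b+2,t]$ containing $t$ (when $\epsilon=1$). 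This contributes the factor $U(y,b+2,t)$, with the convention $U(t+1,t)=1$ absorbing both cases by shifting the index. The element $a-1$ is forced out of $S$ when $a>1$. I will argue that the part of $S$ in $[1,a-2]$ is unconstrained by membership in $\Lambda(t,a,b)$, because the maximality in the definitions of $\lambda$ and $\kappa$ is decided by the blocks adjacent to $a$.

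\textbf{Step 1 (the left part).} I must check that the constraints to the left of $a$ are encoded exactly by $P(y,1,a-1)$ up to the inclusion/exclusion needed, so that only $y(a-1)=0$ is imposed. Summing $y(S')$ over free subsets $S'$ of $[1,a-2]$ gives $\prod(1+y)$-type cancellations over $\mathbb{F}_2$. I expect this to be the main obstacle. The expansion $\sum_{S'\subseteq[1,a-2]} y(S')=\prod_{i\le a-2}(1+y(i))$ holds, but the conditions defining $\kappa$ (the first block of the lead is preceded either by a skip of length $\ge 2$ or by an even interior block) are not literally "free". My plan is to split on whether the first lead block has even or odd length, which is the role of $T_0$ versus $T_1$. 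In the even-length case, I extend the lead one step left to absorb the even block and reduce to the odd case. Telescoping these cases in $\mathbb{F}_2$ should leave exactly the factor $P(y,1,i)$, with $i=a-2$ in \eqref{l 2 sum}, and the boundary $a=1$ handled by $P(1,-1)=P(1,0)=1$.

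\textbf{Step 2 (the lead).} Using Step 1, I reduce the lead to a skip-1 set all of whose blocks are odd, i.e.\ to an element of $T_1(a,b)$. Its length is $\equiv 1 \pmod 4$, since an all-odd skip-1 set has odd length and $\Lambda$ requires length $\equiv 1$ or $2\pmod 4$. Lemma \ref{l V} then gives $\sum_{S\in T_1(a,b)}y(S)=V_1(y,a,b)$ with $b=a+4j$. Writing $a=i+2$ gives $V_1(y,i+2,i+3+4j)$ after matching the paper's indexing, where the extra $+1$ accounts for the skip before the final $U$ block. The right factor becomes $U(y,i+5+4j,t)$. The ranges $-1\le i\le t-4j-4$ and $0\le j\le\lfloor (t-3)/4\rfloor$ come from requiring the three pieces to fit in $[1,t]$.

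\textbf{Step 3 (the case $a=1$).} When the lead starts at $1$, there is no preceding skip and no $P$ factor. Here the length-$2 \pmod 4$ possibility reappears, and it is not killed by the Step 1 reduction. I will treat it separately: the lead is $[1,4i+1]$ and the final block is $[4i+3,t]$, giving the second sum \eqref{l 1 sum} $\sum_i V_1(y,1,4i+1)U(4i+3,t)$. As a sanity check, I will verify the whole identity for small $t$ ($t=1,\dots,5$) by direct enumeration of $\Lambda(t)$ before writing the general bookkeeping, since off-by-one errors in the index conventions are the likeliest failure point.
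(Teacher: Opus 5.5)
\textbf{There is a genuine gap: the bookkeeping to the left of $a$ is wrong, and the pairing that produces the formula is missing.} Your frame is the paper's: split $\Lambda(t)$ by the span $[a,b]$ of the lead, write $S=Z\cup\mathrm{lead}(S,t)\cup[b+2,t]$, and apply Lemma \ref{l V} to the lead. But the left of $a$ is exactly where the shape of the formula comes from.

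By minimality of $\kappa$, if the first lead block (the one starting at $a$) has odd length, then $a-2\notin S$ as well as $a-1\notin S$. Otherwise the block ending at $a-2$, followed by the single skip $\{a-1\}$, would also satisfy the defining condition, and $\kappa$ would drop. So the free part is $[1,a-3]$, not $[1,a-2]$. Only when the first lead block has even length is $a-1$ the sole forced exclusion, because that even block itself stops $\kappa$ from dropping. Your parenthetical puts the parity condition on the preceding block rather than on the first lead block.

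These errors break the later steps:
\begin{itemize}
\item Step 2 does not produce the stated terms. With $a=i+2$ and $b=a+4j$, Lemma \ref{l V} gives $V_1(y,i+2,i+2+4j)$ and $U(y,i+4+4j,t)$. The ``extra $+1$ for the skip'' is not a valid fix. In $V_1(y,i+2,i+3+4j)$ the right endpoint carries the forced factor $y(i+3+4j)$, so it lies in $S$. The factor that differs from an odd-length $V_1$ is $(1+y(i+2))$ at the \emph{left} end.
\item Step 3 is misattributed: $[1,4i+1]$ has length $\equiv 1$, not $2$, mod $4$. For example, at $t=3$ the right side is $y(2)(1+y(1))+y(1)y(3)$. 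The monomial $y(1)y(2)$ comes from the length-$2$ lead $\{1,2\}$ and sits in the $i=-1$ term of \eqref{l 2 sum}, not in \eqref{l 1 sum}.
\item A smaller point: $\epsilon=0$ means $t\notin S$, so the case absorbed by $U(y,t+1,t)=1$ is $b=t-1$, not $b=t$.
\end{itemize}

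\textbf{The missing idea is a two-term pairing, not a telescoping.} Fix $a\ge 2$ and $b$ with $b-a+1\equiv 1\pmod 4$. Compare two families:
\begin{itemize}
\item the sets whose lead spans $[a,b]$ with odd first block (the paper's $\Lambda_1(t,a,b)$);
\item the sets whose lead spans $[a-1,b]$ with even first block ($\Lambda_0(t,a-1,b)$, lead length $\equiv 2\pmod 4$).
\end{itemize}
By the analysis above, both families have free part exactly $[1,a-3]$ and right part $[b+2,t]$. Deleting $a-1$ is a bijection $T_0(a-1,b)\to T_1(a,b)$. So the two families contribute
\[
P(y,1,a-3)V_1(y,a,b)U(y,b+2,t)\quad\text{and}\quad P(y,1,a-3)\,y(a-1)\,V_1(y,a,b)U(y,b+2,t).
\]
Since $[a-1,b]$ has even length, $(1+y(a-1))V_1(y,a,b)=V_1(y,a-1,b)$. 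The sum is therefore the general term of \eqref{l 2 sum} with $i=a-3$; the case $a=2$, i.e.\ even leads starting at $1$, is $i=-1$. This is why the $V_1$ there runs over an interval of length $4j+2$.

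The only unpaired sets are those whose odd first lead block starts at $a=1$, since they have no partner starting at $0$. These give \eqref{l 1 sum}.
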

\begin{proof}

Suppose $1 \leq a < b <t$. 

%let $H(t,r,s)$ denote the set of subsets
%\[
%H(t,r,s) = \{ [s,t] \cup M \colon  M \subseteq [1,r]\}.
%\]
%denoting $[r,t]  = \emptyset$ for $r > t$. For $Z \in H(t,a,b)$,  let $\Lambda_i(t,a,b, Z) \subseteq \Lambda_i(t,a,b)$ denote the set of those $S$ such %that 
%\[
 %\Lambda_i(t,a,b, Z)  = \{S \in \Lambda_i(t,a,b) \colon  S \backslash [a,b] = Z \}.
%\]
%Let $\bar{\Lambda}_i(t,a,b)$ denote the set
%\[
%\bar{\Lambda}_i(t,a,b)=\{\mathrm{lead}(S,t) \colon S \in \Lambda_i(t,a,b) \}.
%\]
By construction we have
\begin{align*}
\Lambda_0(t,a,b) &= \emptyset \text{ if } b-a+1 \not\equiv 2 \pmod 4 \\ 
 \Lambda_1(t,a,b) &= \emptyset \text{ if } b-a+1 \not\equiv 1 \pmod 4. 
\end{align*}
Thus 
\begin{align}
\sum_{S \in \Lambda(t)} y(S) =& \sum_{a=2}^{t-1} \sum_{i=0}^{\lfloor \frac{t-1-a}{4} \rfloor}((\sum_{S \in \Lambda_1(t,a,a+4i)} y(S))+ (\sum_{S \in \Lambda_2(t,a-1,a+4i)} y(S)))\label{2 sum}\\ 
&+ \sum_{i=0}^{\lfloor \frac{t-2}{4} \rfloor}\sum_{S \in \Lambda_1(t,1,1+4i)} y(S). \label{1 sum}
\end{align}

Assume $b-a \equiv 0 \pmod 4$. Suppose $S \in \Lambda_1(t,a,b)$. Then $S$ is of the form 
\[
S = Z \cup \mathrm{lead}(S,t) \cup [b+2,t]
\]
where $Z$ may be any set 
\[
Z \subseteq [1,a-3].
\]
This follows from the construction of lead sequences: if $W \in T_1(a,b)$ and \[W\cup Z \cup [b+2,t] \in \Lambda_1(t,a,b),\] then we must have $a-1 \notin Z$ and $a-2 \notin Z$. 
Thus
\begin{equation} \label{SZ1}
\sum_{S \in \Lambda_1(t,a,b)} y(S) = \left(\sum_{Z \subseteq [1,a-3]} y(Z) \right )\left (\sum_{W \in T_1(a,b)} y(W) \right ) U(b+2, t) .
%&= \sum_{Z \in H(t,a,b)} P(y; 1,a-2) U(y; b+2,t)\sum_{W \in \bar{\Lambda}_1(t,a,b)} y(W) \\ 
%&= 
\end{equation}

Now
\[
\sum_{W \in T_1(a,b)} y(W) = V_1(y; a,b))
\]
by Lemma \ref{l V}. 
Thus \eqref{SZ1} is equal to 
\begin{equation} \label{PVU}
P(y; 1,a-3)V_1(y; a,b)U(y; b+2,t).
\end{equation}
This shows line \eqref{1 sum} evaluates to line \eqref{l 1 sum}. 

Next, for $a \geq 2$, suppose $S \in \Lambda_0(t,a-1,b)$. Then $S$ is of the form 
\[
S =  Z \cup \mathrm{lead}(S,t) \cup [b+2,t]
\]
where $Z$ may be any set 
\[
Z \subseteq [1,a-3].
\]

\begin{align} 
\sum_{S \in \Lambda_0(t,a-1,b)} y(S) =& \left (\sum_{Z \subseteq [1,a-3]} y(Z) \right ) \left (\sum_{W \in T_0(a-1,b)} y(W) \right ) U(y; b+2, t) \nonumber  \\
=& P(y; 1,a-3)y(a-1)\left(\sum_{W \in T_1(a,b)} y(W) \right)U(y; b+2, t)  \nonumber \\ 
= & P(y; 1,a-3)y(a-1)V_1(y; a,b)U(y; b+2,t).\label{SZ0}
\end{align}

Adding results \eqref{PVU} and \eqref{SZ0} yields 
\[
P(y; 1,a-3)V_1(y; a-1,b)U(y; b+2,t).
\]
This shows that line \eqref{2 sum} evaluates to line \eqref{l 2 sum}. This completes the proof. 
\end{proof}

\begin{corollary} 
 \begin{align}
\delta_1(y;j+1) =& \sum_{j=0}^{\lfloor \frac{t-3}{4}\rfloor}\sum_{i =-1}^{t-4j-4}P(y,1,i) V_1(y,i+2,i+3+4j) U(y,i+5+4j,t) \\
& + \sum_{i=0}^{\lfloor \frac{t-2}{4}\rfloor} V_1(y,1,4i+1) U(4i+3, t) 
\end{align}
\end{corollary}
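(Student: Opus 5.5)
The corollary is a direct chaining of three earlier results, so the plan is to identify $\delta_1$ with $\sum_{S\in\Lambda(\cdot)} y(S)$ and then quote Theorem \ref{t 1 2 sum}. The statement has index bookkeeping that must be fixed first. The left side is $\delta_1(y;j+1)$, while the right side is written in terms of $t$ and also reuses $j$ as a summation variable. I will read the intended statement as
\[
\delta_1(y;t) = \text{(right-hand side of Theorem \ref{t 1 2 sum} at } t\text{)}.
\]
Equivalently, the left side is $\delta_1(y;j+1)$ with $t=j+1$ and the inner summation index renamed. Consistency with Lemma \ref{l delta sums}, where $\delta(y;j+1)=\delta_1(y;j)+\delta_2(y;j)$, should be checked so that the shift of index is applied uniformly.

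First, unwind the definition:
\[
\delta_1(y;t)=\sum_{k=0}^{t-1}\gamma(y;t-k)\prod_{i=1}^k\bigl(y(t-i+1)+y(t-i)\bigr).
\]
This is literally the left side of Lemma \ref{l 1Lambda} with $j$ replaced by $t$. Invoking that lemma gives
\[
\delta_1(y;t)=\sum_{S\in\Lambda(t)}y(S).
\]
Then Theorem \ref{t 1 2 sum}, applied with the same $t$, rewrites $\sum_{S\in\Lambda(t)}y(S)$ as the double sum of $P\,V_1\,U$ terms plus the single sum of $V_1\,U$ terms. The conventions $P(1,-1)=P(1,0)=U(t+1,t)=1$ are inherited from that theorem.

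The main obstacle is purely notational. I must make sure the argument at which Lemma \ref{l 1Lambda} is applied matches the upper limit $t$ in Theorem \ref{t 1 2 sum}, so that $\Lambda(t)$ rather than $\Lambda(t\pm1)$ appears. If the corollary is meant with $t=j+1$ and $\delta_1(y;j+1)$, this matches directly. If instead the convention from Lemma \ref{l delta sums} requires evaluating at $j$, I would restate the corollary with $t=j$.

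I would also check a small case by hand to confirm the alignment, namely $t=3$. On the right side only $j=0$, $i=-1$ contributes to the double sum, giving $V_1(y,1,3)U(y,4,3)=y(3)(1+y(2))y(1)$. The single sum contributes $V_1(y,1,1)U(3,3)=y(1)y(3)$. These should be compared against the direct expansion of $\delta_1(y;3)$ using Theorem \ref{t gamma sum}; any discrepancy there would signal which index shift is intended.
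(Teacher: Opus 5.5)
Your proposal is correct and is the paper's own proof: $\delta_1(y;t)$ is by definition the left side of Lemma \ref{l 1Lambda} at $j=t$, so it equals $\sum_{S\in\Lambda(t)}y(S)$, and Theorem \ref{t 1 2 sum} at the same $t$ finishes it, reading the corollary's left side with $t=j+1$ and needing no further index shift. Your $t=3$ check has an arithmetic slip, though: the $i=-1$, $j=0$ term is $V_1(y,1,2)U(y,4,3)=y(2)(1+y(1))$, not $V_1(y,1,3)\cdots$, and with that correction the right side $y(2)(1+y(1))+y(1)y(3)$ agrees with $\delta_1(y;3)=\gamma(y;3)+\gamma(y;2)\bigl(y(3)+y(2)\bigr)=y(2)+y(1)y(3)+y(1)y(2)$, provided you take $\gamma(y;2)=y(1)$ and $\gamma(y;3)=y(2)$ from the recursion $\gamma(y;j+1)=y(j)\bigl(P(y;j-1)+\gamma(y;j)\bigr)$, because the printed upper limit $\lfloor\frac{j-3}{2}\rfloor$ in Theorem \ref{t gamma sum} makes both of those sums empty and would give a spurious mismatch.
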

\begin{proof} 
This follows from Theorem \ref{t 1 2 sum} and Lemma \ref{l 1Lambda}.  
\end{proof}. 

\section{Further Work}

\begin{itemize} 
\item Analyze the carry sequence for 
\[
\mathrm{PlusPowerTwo}(y) \text{ and } \mathrm{Shift_1}(y)
\]
 and the carry sequence for
 \[
\mathrm{Add}(\mathrm{PlusPowerTwo}(y), \mathrm{Shift_1}(y)) \text{ and } y. 
 \]
\end{itemize}

\end{document}